\documentclass[11pt]{article}
\usepackage{amssymb}\usepackage{amsmath}
\usepackage{amsthm}
\textheight 210mm
\textwidth 150mm
\oddsidemargin 5mm
\evensidemargin 5mm
\topmargin -10mm
\headheight 12pt \headsep 4mm
\sloppy
\flushbottom
\parindent1em
\parskip0ex
\leftmargini 2em
\leftmarginv .5em
\leftmarginvi .5em
\textfloatsep 3mm

\def\ST{{\mathsf T}}
\def\SP{{\mathsf P}}
\def\SR{{\mathsf R}}
\def\SX{{\mathsf X}}



\newtheorem{proposition}{\bfseries Proposition}
\newtheorem{lemma}{\bfseries Lemma}
%
\begin{document}  
 
\begin{center}
{\bf\large\sc   
Two relations for the antisymmetrizer in the Hecke algebra 
} 

\vspace*{2mm}
{ Andrei Bytsko   }
\vspace*{2mm}

\end{center}
\vspace*{2mm}

\begin{abstract} 
We prove two relations for the antisymmetrizer in the Hecke algebra and 
derive certain restrictions imposed by these relations on unitary representations 
of the Hecke algebra on tensor powers of the space ${\mathbb C}^n$.  
\end{abstract}
  

\section{Introduction}

The Hecke algebra $H_N(q)$ is a unital associative algebra over ${\mathbb C}(q)$ with  
the generators $\SR_1, \ldots, \SR_{N-1}$ and relations
\begin{align}
\label{defH1} 
{}& \SR_k^2 = {\mathsf 1} + (q - q^{-1}) \, \SR_k  , \qquad k =1,\ldots, N-1 , \\[1mm]
\label{defH2} 
{}&   \SR_k \SR_m \SR_k = \SR_m \SR_k \SR_m , \qquad  |k-m| =1 ,\\[1mm] 
\label{defH3} 
{}&  \SR_k \SR_m =  \SR_m \SR_k ,  \qquad  |k-m| \geq 2 . 
\end{align}
The Hecke algebra can be regarded as a $q$-deformation of the group algebra of the symmetric 
group $S_N$. Relation (\ref{defH2}) coincides with the Yang-Baxter equation in the 
braid group form. For this reason, representations  of the Hecke algebra play important 
role in the theory of quantum groups \cite{Gur1,Jim1} and in the theory of  quantum 
integrable systems. 

Recall that a $q$-deformation of a number $k \in \mathbb Z$ is defined by  
\begin{align}\label{qnum} 
{}& [k] = \frac{q^k - q^{-k}}{ q - q^{-1} } .
\end{align}
We will need the following easily verifiable identities  
\begin{align}\label{qnid1} 
{}&  [k-1] + [k+1] = [2] [k], \qquad [k-1]  [k+1] +1 = [k]^2 .
\end{align}

The $q$-antisymmetrizer $\SP_N \in H_N(q)$ is defined  recursively by 
the relations (where by 
 $\SP_N \in H_{N+1}(q)$ is meant the element $\SP_N \otimes {id}$)   
\begin{equation}\label{PindH} 
 \SP_1 = {\mathsf 1} , \qquad  
 \SP_{N+1} = \frac{1}{[N+1]} \, \SP_N 
 \bigl( q^{N} -  [N] \, \SR_N \bigr)  \, \SP_N  . 
\end{equation} 
$\SP_N$ is an idempotent   
\begin{equation}\label{Psq} 
  \SP_{N}^2 = \SP_{N} , 
\end{equation} 
and it satisfies the following relations 
\begin{align}
\label{Qas1}
{}& ( \SR_k + q^{-1} \cdot {\mathsf 1}) \, \SP_N = \SP_N \, ( \SR_k + q^{-1} \cdot {\mathsf 1} ) = 0 , 
 \qquad     k= 1,\ldots, N-1 .
\end{align}

Let $\SP'_N$ denote the image of the element  $\SP_N \in H_{N+1}(q) $ 
under the shift of generators 
\begin{align}
\label{Rsdvig}
{}&  \SR'_k = \SR_{k+1} , \qquad k= 1,\ldots, N-1 . 
\end{align}

In the present work, we will establish the relation  
\begin{align}\label{PPP1} 
 {}&   (\SP_{N} - \SP_{N}' )^3   =  \frac{[N-1] [N+1]}{[N]^2} \, (\SP_{N} - \SP_{N}' )  .
\end{align}
Besides, we will show that the generators $\ST_ k = q^{-1} \cdot {\mathsf 1} + \SR_k$ 
satisfy the relation 
\begin{eqnarray}
\nonumber 
{}&  \displaystyle 
 \bigl( [2]^2 +1 \bigr) \, \ST_N \SP_N \ST_N = &  
   \frac{ [2] \bigl( [N+2] +2 [N] \bigr) }{[N]} \,  \ST_N \SP_{N-1} \\
\label{TPT1}
{}&  \displaystyle 
    &  - \frac{[N-1]}{[N]} \, \SP_{N-1} \ST_N \ST_{N-1} \ST_N \ST_{N-1}  \ST_N  \SP_{N-1}   
\end{eqnarray}
and we will consider some restrictions that relations (\ref{PPP1})  and  (\ref{TPT1}) 
impose on unitary representations of the Hecke algebra on spaces that are  tensor 
powers of ${\mathbb C}^n$.  In \cite{By3}, relation (\ref{PPP1}) was obtained for the 
Jones-Wenzl projector of the Temperley-Lieb algebra $TL_N(Q)$. 
We remark that, in the Temperley-Lieb algebra case, relation (\ref{TPT1}) takes the 
following simpler form  
\begin{eqnarray}
\label{TPTTL}
{}&  \displaystyle 
  \ST_N \SP_N \ST_N =  \frac{[N+1]}{[N]} \,   \ST_N \SP_{N-1} .
\end{eqnarray}

\section{Derivation of the relations for $\SP_N$}

Introduce new generators 
\begin{equation}\label{TRq} 
    \ST_ k = q^{-1} \cdot {\mathsf 1} + \SR_k    .
\end{equation}  
For these generators, relations (\ref{defH1})--(\ref{defH3}) are equivalent  to 
the following ones  
\begin{align}
\label{defHT1}
{}&  \ST_k ^2 = (q+q^{-1}) \ST_k ,   \qquad k =1,\ldots, N-1 , \\[1mm]
\label{defHT2} 
{}& \ST_k \ST_m \ST_k - \ST_m \ST_k \ST_m = \ST_k - \ST_m ,  \qquad  |k-m| =1 ,\\[1mm]
\label{defHT3} 
{}&  \ST_k \ST_m =  \ST_m \ST_k ,  \qquad  |k-m| \geq 2 ,
\end{align}
and formulae (\ref{PindH}), (\ref{Qas1}), and  (\ref{Rsdvig}) acquire the form 
\begin{align}
\label{PindHT} 
{}&  \SP_1 ={\mathsf 1} , \qquad 
\SP_{N+1} =  \SP_N - \rho_N \, 
 \SP_N \, \ST_N  \, \SP_N ,  \qquad   \rho_N = \frac{[N]}{[N+1]}  ,  \\[1mm] 
\label{Qas2} 
{}&  \ST_k \, \SP_N = \SP_N \, \ST_k = 0 , 
 \qquad     k= 1,\ldots, N-1 ,  \\[1mm] 
 \label{Tsdvig}
{}&  \ST'_k = \ST_{k+1} , \qquad k= 1,\ldots, N-1 . 
\end{align}

Let $\phi_N$ be an automorphism of the algebra $H_N(q)$ defined on the generators as follows 
\begin{equation}\label{phiT} 
    \phi_N \bigl( \ST_{k} \bigr) =  \ST_{N -k} , \qquad k =1,\ldots, N-1 .
\end{equation} 
{}From relations (\ref{Qas2}) we infer that   
$\ST_k  \, \phi_N \bigl( \SP_N ) = \phi_N \bigl( \SP_N )   \ST_k = 0$ 
for all $ k= 1,\ldots, N-1$. Relations (\ref{PindHT})  imply that
 $(\SP_N - {\mathsf 1})$ is a sum of monomials in 
$\ST_1$, \ldots, $\ST_{N-1}$. These properties allow us to check
that $\SP_N$ is invariant under the action of the automorphism $\phi_N$, 
\begin{equation}\label{phiinv} 
    \phi_N \bigl( \SP_N \bigr)  
 =   \phi_N \bigl( \SP_N \bigr) ({\mathsf 1} - \SP_N + \SP_N)
 = \phi_N \bigl( \SP_N \bigr)  \SP_N 
 = \bigl( \phi_N \bigl( \SP_N - {\mathsf 1}\bigr) +  {\mathsf 1} \bigr)  \SP_N
 = \SP_N .
\end{equation} 
Note that $\phi_{N+1} \bigl(\phi_{N} (\SX) \otimes id \bigr) = \SX'$ for any  
$\SX \in H_N(q)$.  In particular, taking into account the property (\ref{phiinv}), 
we see that 
$\phi_{N+1} \bigl( \SP_N \otimes id ) = \SP'_N$. 
Therefore, applying  the automorphism $\phi_{N+1}$ to the relation (\ref{PindHT}), 
we obtain  
\begin{align}\label{PPindHT} 
{}&  
\SP_{N+1} =  \SP'_N - \rho_N \, 
 \SP'_N \, \ST_1  \, \SP'_N  . 
\end{align} 

\begin{lemma} 
In the algebra $H_{N+2}(q)$, the following relations hold
\begin{eqnarray}
\label{delPP}
{}& 
 \SP_{N+1} - \SP'_{N+1}  = 
 \rho_N \, \SP'_N (  \ST_{N+1} - \ST_1 ) \SP'_N  , \\[2mm] 
\label{pttp1} 
{}&  
\rho_N \, \SP'_N \ST_1 \SP'_N \ST_1 \SP'_N  =  \SP'_N \ST_1 \SP'_N  , \\[2mm]
\label{pttp2} 
{}&
\rho_N \, \SP'_N \ST_{N+1} \SP'_N \ST_{N+1} \SP'_N  =  \SP'_N \ST_{N+1} \SP'_N  , \\[1mm]
\label{pttp3} 
{}& \displaystyle 
  \SP'_N \bigl(  \ST_1 \SP'_N \ST_{N+1} \SP'_N \ST_1 -
   \ST_{N+1} \SP'_N \ST_1 \SP'_N \ST_{N+1} \bigr) \SP'_N = 
   - \frac{[N+1]}{[N]^3} \bigl( \SP_{N+1} - \SP'_{N+1} \bigr) .
\end{eqnarray}
\end{lemma}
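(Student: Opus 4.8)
The plan is to dispatch \eqref{delPP}, \eqref{pttp1}, \eqref{pttp2} almost mechanically and to concentrate the effort on \eqref{pttp3}. Throughout I abbreviate $P=\SP'_N$, $S=\ST_1$, $T=\ST_{N+1}$, and I record the two presentations that drive everything: \eqref{PPindHT} reads $\SP_{N+1}=P-\rho_N\,PSP$, while shifting the defining recursion \eqref{PindHT} by one generator gives $\SP'_{N+1}=P-\rho_N\,PTP$. Equivalently, $PSP=\rho_N^{-1}(P-\SP_{N+1})$ and $PTP=\rho_N^{-1}(P-\SP'_{N+1})$, and these identities are the engine of the whole argument.

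Relation \eqref{delPP} is then immediate: subtracting the two presentations yields $\SP_{N+1}-\SP'_{N+1}=\rho_N\,P(T-S)P$, which is the claim. For \eqref{pttp1} I would expand the idempotency $\SP_{N+1}^2=\SP_{N+1}$ written as $(P-\rho_N A)^2=P-\rho_N A$ with $A=PSP$; since $P^2=P$ forces $PA=AP=A$, the square reduces to $P-2\rho_N A+\rho_N^2A^2$, and matching the two sides leaves exactly $\rho_N A^2=A$, which is \eqref{pttp1}. Relation \eqref{pttp2} is the same computation for $\SP'_{N+1}{}^2=\SP'_{N+1}$, or simply the image of \eqref{pttp1} under the automorphism $\phi_{N+2}$, which fixes $P$ and exchanges $S$ and $T$.

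For the substantive relation \eqref{pttp3} I would write its left-hand side as $X-Y$ with $X=PSPTPSP$ and $Y=PTPSPTP$, regroup as $X=(PSP)\,T\,(PSP)$ and $Y=(PTP)\,S\,(PTP)$, and insert the two presentations above. Each triple product splits into a diagonal part ($PTP$ and $\SP_{N+1}T\SP_{N+1}$ for $X$; $PSP$ and $\SP'_{N+1}S\SP'_{N+1}$ for $Y$) and a cross part. The diagonal part I would evaluate with the next instance of the recursion, $\SP_{N+1}\ST_{N+1}\SP_{N+1}=\rho_{N+1}^{-1}(\SP_{N+1}-\SP_{N+2})$, together with its $\phi_{N+2}$-image $\SP'_{N+1}\ST_1\SP'_{N+1}=\rho_{N+1}^{-1}(\SP'_{N+1}-\SP_{N+2})$; in the difference $X-Y$ the term $\SP_{N+2}$ drops out. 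The cross terms such as $\SP'_N\ST_1\SP'_{N+1}$ I would collapse using the absorption identities $\SP'_N\SP_{N+1}=\SP_{N+1}=\SP_{N+1}\SP'_N$ and $\SP'_N\SP'_{N+1}=\SP'_{N+1}=\SP'_{N+1}\SP'_N$, valid because $\SP'_N-{\mathsf 1}$ is a sum of monomials in $\ST_2,\ldots,\ST_N$, every one of which is annihilated, on either side, by both $\SP_{N+1}$ and $\SP'_{N+1}$.

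The step I expect to look like an obstruction is the emergence, in the cross terms, of the products $\SP_{N+1}\SP'_{N+1}$ and $\SP'_{N+1}\SP_{N+1}$ --- genuinely new ``$\SP_{N+2}$-type'' objects for which I have no reduction. The resolution, and the reason for forming the antisymmetric combination $X-Y$ rather than either term alone, is that these two products occur with opposite signs and cancel identically, so that $X-Y$ collapses to a scalar multiple of $\SP_{N+1}-\SP'_{N+1}$. Collecting the surviving coefficients leaves the scalar $\rho_N^{-2}(\rho_{N+1}^{-1}-\rho_N^{-1})$; writing $\rho_N^{-1}=[N+1]/[N]$ and using the identity $[N][N+2]+1=[N+1]^2$ from \eqref{qnid1} (with $k=N+1$) turns $[N][N+2]-[N+1]^2$ into $-1$ and produces exactly the coefficient $-[N+1]/[N]^3$ of \eqref{pttp3}. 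As a sanity check, both sides are anti-invariant under $\phi_{N+2}$, which exchanges $X\leftrightarrow Y$ and $\SP_{N+1}\leftrightarrow\SP'_{N+1}$.
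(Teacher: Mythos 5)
Your proposal is correct, and it deviates from the paper's proof in two instructive places. For \eqref{pttp1}--\eqref{pttp2} the paper does not invoke idempotency: it descends one level, writing $\SP'_N$ via $\SP'_{N-1}$, and evaluates $\SP'_N \ST_{N+1} \SP'_N \ST_{N+1} \SP'_N$ using the braid-type relation \eqref{defHT2} together with \eqref{PPpr}, with a separate base case $N=1$, then transports the result by $\phi_{N+2}$. Your one-line derivation from $\SP_{N+1}^2=\SP_{N+1}$ (equation \eqref{Psq}, which the paper quotes as a known property, so it is fair game) is slicker, needs no base case, and bypasses \eqref{defHT2} entirely; the trade-off is that the paper's computation is self-contained in the $\ST_k$-relations, while yours outsources the work to the known idempotency of the antisymmetrizer. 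For \eqref{pttp3} your argument is essentially the paper's computation run in reverse: the paper starts from the difference of the two level-$(N+1)$ recursions, $\rho_{N+1}^{-1}(\SP_{N+1}-\SP'_{N+1})=\SP_{N+1}\ST_{N+1}\SP_{N+1}-\SP'_{N+1}\ST_1\SP'_{N+1}$, and expands downward in $\SP'_N$ (there the mixed quadratic terms cancel), whereas you start from $X-Y$ and expand upward via $\SP'_N\ST_1\SP'_N=\rho_N^{-1}(\SP'_N-\SP_{N+1})$ and its primed twin. I checked your bookkeeping: with the absorption identities (valid by \eqref{Qas2}, exactly as you argue) the four cross terms give $\SP'_N\ST_{N+1}\SP_{N+1}+\SP_{N+1}\ST_{N+1}\SP'_N-\SP'_N\ST_1\SP'_{N+1}-\SP'_{N+1}\ST_1\SP'_N=2\rho_N^{-1}(\SP_{N+1}-\SP'_{N+1})$, with $\SP_{N+1}\SP'_{N+1}$ and $\SP'_{N+1}\SP_{N+1}$ cancelling as you predicted, the $\SP_{N+2}$ contributions drop out of the diagonal part, and the surviving scalar $\rho_N^{-2}\bigl(\rho_N^{-1}-2\rho_N^{-1}+\rho_{N+1}^{-1}\bigr)=\rho_N^{-2}\bigl(\rho_{N+1}^{-1}-\rho_N^{-1}\bigr)$ equals $-[N+1]/[N]^3$ by the same arithmetic as \eqref{rhoNN2}. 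So the ingredients for \eqref{pttp3} coincide with the paper's (both use the level-$(N+1)$ recursion in its two forms and \eqref{delPP}); what your organization buys is a uniform treatment with no separate $N=1$ case and a conceptual explanation --- the antisymmetric combination $X-Y$ is forced precisely so that the irreducible products $\SP_{N+1}\SP'_{N+1}$, $\SP'_{N+1}\SP_{N+1}$ cancel.
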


\begin{proof}

Applying the shift (\ref{Tsdvig}) to the relation (\ref{PindHT}), we obtain 
\begin{align}\label{PPPindHT} 
{}&  
\SP'_{N+1} =  \SP'_{N} - \rho_{N} \,  \SP'_{N} \, \ST_{N+1}  \, \SP'_{N}  . 
\end{align} 
Relation (\ref{delPP}) is the difference of relations (\ref{PPindHT}) and (\ref{PPPindHT}). 

Note that, as a consequence of relations (\ref{Qas2}), we have for $N \geq 2$ the equalities 
\begin{align}\label{PPpr} 
{}&  
\SP'_N  \SP'_{N-1} = \SP'_{N-1} \SP'_N = \SP'_N , \quad
\ST_{N+1} \SP'_{N-1} =   \SP'_{N-1} \ST_{N+1} \quad  
\ST_{N} \SP'_{N} =   \SP'_{N} \ST_{N} = 0 .
\end{align} 
Let us prove relation (\ref{pttp2}). For $N=1$, it coincides with  
relation (\ref{defHT1}). For $N \geq 2$,  we have 
\begin{align} 
{}& \nonumber 
\SP'_N \ST_{N+1} \SP'_N \ST_{N+1} \SP'_N  \\
{}& \nonumber  
\stackrel{(\ref{PPPindHT})}{=}
\SP'_N (  \ST_{N+1} \SP'_{N-1}  \ST_{N+1}  
- \rho_{N-1} \, \ST_{N+1} \SP'_{N-1}   \ST_N  \, \SP'_{N-1} \ST_{N+1} ) \SP'_N \\
{}& \nonumber  
\stackrel{(\ref{PPpr})}{=}
\SP'_N (  \ST^2_{N+1}  
- \rho_{N-1} \, \ST_{N+1}   \ST_N    \ST_{N+1} ) \SP'_N \\
{}& \nonumber   
\stackrel{(\ref{defHT1}),(\ref{defHT2})}{=}
 \SP'_N (  [2] \ST_{N+1}  
- \rho_{N-1} \, ( \ST_N \ST_{N+1} \ST_N + \ST_{N+1} - \ST_N ) ) \SP'_N \\
{}& \label{ptprho1}  
\stackrel{(\ref{PPpr})}{=}  ([2]  - \rho_{N-1} ) \, \SP'_N \ST_{N+1} \SP'_N  
= \frac{1}{\rho_N} \, \SP'_N \ST_{N+1} \SP'_N .
\end{align}  
The last equality uses the identity 
\begin{align}\label{rhoNN} 
{}&  
[2]  - \rho_{N-1} =
 \frac{[2][N] -[N-1]}{[N]}  \stackrel{(\ref{qnid1})}{=}  \frac{[N+1]}{[N]}  
   = \frac{1}{\rho_N}  . 
\end{align}
Relation (\ref{pttp1}) can be obtained from relation (\ref{pttp2}) 
by applying the automorphism $\phi_{N+2}$ and taking into account that 
$\phi_{N+2}(\SP'_N) = \SP'_N$.  

Let us prove relation (\ref{pttp3}). For $N=1$, it coincides with 
relation (\ref{defHT2}) for $k=1$ and $m=2$ since we have  
$\SP_2 = {\mathsf 1} - \frac{1}{[2]} \ST_1$. In order to treat the case   
$N \geq 2$, we consider the difference of relations (\ref{PindHT}) and  (\ref{PPindHT}) 
(where $N$ is replaced with $(N+1))$:  
\begin{align} 
{}&  \nonumber 
\frac{1}{\rho_{N+1} } ( \SP_{N+1} - \SP'_{N+1} )    
=    \SP_{N+1}   \ST_{N+1}   \SP_{N+1} - \SP'_{N+1}   \ST_1   \SP'_{N+1}  \\
{}&  \nonumber  
\stackrel{(\ref{PPindHT}),(\ref{PPPindHT})}{=}
(\SP'_N - \rho_N \, \SP'_N   \ST_1  \SP'_N ) \ST_{N+1} (\SP'_N - \rho_N \, \SP'_N   \ST_1  \SP'_N ) \\[1mm] 
{}&  \nonumber  
\qquad\quad\ 
- (\SP'_N - \rho_N \, \SP'_N  \ST_{N+1}  \SP'_N ) \ST_1 (\SP'_N - \rho_N \, \SP'_N  \ST_{N+1}  \SP'_N ) \\[1mm]
{}&  \nonumber   
 = \SP'_N  ( \ST_{N+1} - \ST_1 )  \SP'_N  + \rho^2_N \, \SP'_N
  (\ST_1 \SP'_N \ST_{N+1} \SP'_N \ST_1 -
   \ST_{N+1} \SP'_N \ST_1 \SP'_N \ST_{N+1} )   \SP'_N \\ 
{}&  \nonumber   
\stackrel{(\ref{delPP})}{=} 
\frac{1}{\rho_N} ( \SP_{N+1} - \SP'_{N+1} )  + \rho^2_N \, 
 \SP'_N (\ST_1 \SP'_N \ST_{N+1} \SP'_N \ST_1 -
   \ST_{N+1} \SP'_N \ST_1 \SP'_N \ST_{N+1} )   \SP'_N . 
\end{align} 
The equality of the first and the last expressions is equivalent to relation (\ref{pttp3}) 
because we have the following identity  
\begin{align}\label{rhoNN2}
{}&   
 \frac{1}{\rho_N} - \frac{1}{\rho_{N+1}} 
 =  \frac{[N+1]}{[N]} - \frac{[N+2]}{[N+1]} 
 =  \frac{[N+1]^2 - [N] [N+2]}{[N] [N+1]} 
  \stackrel{(\ref{qnid1})}{=} \frac{1}{[N] [N+1]} .
\end{align}

\end{proof}
 
\begin{proposition}
In the algebra $H_{N+2}(q)$, relation (\ref{PPP1}) holds.  
\end{proposition}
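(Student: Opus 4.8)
The plan is to prove the instance of (\ref{PPP1}) that naturally lives in $H_{N+2}(q)$, namely the identity obtained from (\ref{PPP1}) by the substitution $N \to N+1$, which reads $(\SP_{N+1} - \SP'_{N+1})^3 = \frac{[N][N+2]}{[N+1]^2}\,(\SP_{N+1} - \SP'_{N+1})$. Writing $D = \SP_{N+1} - \SP'_{N+1}$ and $P = \SP'_N$, I would first record that $P$ is idempotent and that $PD = DP = D$ (because $\SP'_N$ absorbs both $\SP_{N+1}$ and $\SP'_{N+1}$ on either side, by (\ref{Qas2}) and (\ref{PPpr})). Consequently every product below can be computed inside the corner algebra $P\,H_{N+2}(q)\,P$, whose unit is $P$. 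Introducing the two elements $X = P\,\ST_1\,P$ and $Y = P\,\ST_{N+1}\,P$ of this corner algebra, relation (\ref{delPP}) becomes simply $D = \rho_N\,(Y - X)$.

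Next I would rewrite the remaining relations of the Lemma in terms of $X$ and $Y$. Since $P^2 = P$, one has $X^2 = P\,\ST_1\,P\,\ST_1\,P$ and $Y^2 = P\,\ST_{N+1}\,P\,\ST_{N+1}\,P$, so (\ref{pttp1}) and (\ref{pttp2}) say exactly $X^2 = \rho_N^{-1}\,X$ and $Y^2 = \rho_N^{-1}\,Y$. Likewise $XYX = P\,\ST_1\,P\,\ST_{N+1}\,P\,\ST_1\,P$ and $YXY = P\,\ST_{N+1}\,P\,\ST_1\,P\,\ST_{N+1}\,P$, so (\ref{pttp3}), after substituting $D = \rho_N(Y-X)$ together with $\rho_N = [N]/[N+1]$, collapses to the single cubic relation $XYX - YXY = \frac{1}{[N]^2}\,(X - Y)$. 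Thus the whole problem is reduced to a two-element computation governed by one quadratic relation for each of $X$ and $Y$ and this one cubic relation.

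Finally I would expand the cube. Setting $c = \rho_N^{-1} = [N+1]/[N]$, the quadratic relations give $(Y-X)^2 = c\,(X+Y) - (XY + YX)$, and on multiplying once more by $(Y-X)$ the terms proportional to $XY$ and $YX$ cancel, leaving $(Y-X)^3 = c^2\,(Y-X) + (XYX - YXY)$. Inserting the cubic relation yields $(Y-X)^3 = \bigl(c^2 - [N]^{-2}\bigr)(Y - X)$; since the second identity in (\ref{qnid1}) gives $[N+1]^2 - 1 = [N][N+2]$, the scalar equals $c^2 - [N]^{-2} = ([N+1]^2 - 1)/[N]^2 = [N+2]/[N]$. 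Multiplying by $\rho_N^3$ and using $D = \rho_N(Y-X)$ then gives $D^3 = \rho_N^2\,\frac{[N+2]}{[N]}\,D = \frac{[N][N+2]}{[N+1]^2}\,D$, as required. The only genuine obstacle is the cancellation of the cross terms $XY$ and $YX$ in the cube; everything else is bookkeeping of scalars. The conceptual step that makes this transparent is the passage to the corner algebra $P\,H_{N+2}(q)\,P$, which turns the three opaque identities of the Lemma into the clean pair of quadratic relations plus a single cubic relation among $X$ and $Y$.
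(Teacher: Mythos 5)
Your proposal is correct and is essentially the paper's own proof: it relies on exactly the same four identities of Lemma~1 (relations (\ref{delPP})--(\ref{pttp3})) and carries out the same expansion of the square and then the cube of $\SP_{N+1}-\SP'_{N+1}$, with the same cancellation of the cross terms and the same scalar bookkeeping via (\ref{qnid1}). The passage to the corner algebra $\SP'_N\,H_{N+2}(q)\,\SP'_N$ with $X=\SP'_N\ST_1\SP'_N$ and $Y=\SP'_N\ST_{N+1}\SP'_N$ is a tidy notational repackaging that makes the computation more transparent, but it is not a different argument.
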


\begin{proof}
\begin{align} 
{}&  \nonumber 
  \bigl( \SP_{N+1} - \SP'_{N+1}  \bigr)^2 
\stackrel{(\ref{delPP})}{=}  \rho_N^2 \, 
 \SP'_N (  \ST_{N+1} - \ST_1 ) \SP'_N   (  \ST_{N+1} - \ST_1 ) \SP'_N  \\[1mm] 
{}&  \nonumber 
 =  \rho_N^2 \,  \SP'_N (  \ST_{N+1} \SP'_N \ST_{N+1} + \ST_1 \SP'_N  \ST_1 
 - \ST_{N+1} \SP'_N \ST_1 - \ST_1 \SP'_N \ST_{N+1} )  \SP'_N  \\[1mm] 
{}&  \nonumber   
\stackrel{(\ref{pttp1}), (\ref{pttp2})}{=} 
\rho_N  \,  \SP'_N (  \ST_{N+1} + \ST_1  
 - \rho_N  \,  \ST_{N+1} \SP'_N \ST_1 - \rho_N  \,  \ST_1 \SP'_N \ST_{N+1} )  \SP'_N . 
\end{align}
With the help of this relation, we obtain 
\begin{align} 
{}&  \nonumber 
  \bigl( \SP_{N+1} - \SP'_{N+1}  \bigr)^3 \stackrel{(\ref{delPP})}{=}
  \rho_N \, \bigl( \SP_{N+1} - \SP'_{N+1}  \bigr)^2 \, 
  \SP'_N (  \ST_{N+1} - \ST_1 ) \SP'_N  \\[1mm] 
{}&  \nonumber 
= \rho_N^2 \,  \SP'_N (  \ST_{N+1} \SP'_N \ST_{N+1} - \ST_1 \SP'_N  \ST_1  
+ \ST_1 \SP'_N \ST_{N+1} - \ST_{N+1} \SP'_N  \ST_1  \\[1mm] 
{}&  \nonumber 
 \quad +  \rho_N \, \ST_{N+1} \SP'_N  \ST_1 \SP'_N  \ST_1 
 - \rho_N \, \ST_1 \SP'_N  \ST_{N+1} \SP'_N  \ST_{N+1}  \\[1mm] 
{}&  \nonumber 
 \quad  + \rho_N \, \ST_1 \SP'_N  \ST_{N+1} \SP'_N  \ST_1 
 -  \rho_N \, \ST_{N+1} \SP'_N  \ST_1 \SP'_N  \ST_{N+1}  )  \SP'_N  \\[1mm] 
{}&  \nonumber   
\stackrel{(\ref{pttp1}), (\ref{pttp2})}{=}  
 \rho_N  \,  \SP'_N (  \ST_{N+1} - \ST_1 
 + \rho_N^2 \, \ST_1 \SP'_N  \ST_{N+1} \SP'_N  \ST_1 
 -  \rho_N^2 \, \ST_{N+1} \SP'_N  \ST_1 \SP'_N  \ST_{N+1}  )  \SP'_N \\[1mm]   
{}&  \nonumber    
\stackrel{(\ref{delPP}),(\ref{pttp3})}{=}
\SP_{N+1} - \SP'_{N+1} - \frac{1}{[N+1]^2} \bigl( \SP_{N+1} - \SP'_{N+1} \bigr)
 \stackrel{(\ref{qnid1})}{=} 
 \frac{[N] [N+2]}{[N+1]^2} \bigl( \SP_{N+1} - \SP'_{N+1} \bigr) . 
\end{align}
\end{proof}
 
Let us remark that all the relations of Lemma 1 hold as well for the 
Temperley-Lieb algebra $TL_N(Q)$ with the parameter $Q=[2]$, i.e. 
in the case when the generators $\ST_k$ satisfy not the relation (\ref{defHT2})
but relation   
$\ST_k \ST_m \ST_k   = \ST_k $ for $|k-m| =1$. 
Therefore, Proposition~1 holds as well for the algebra $TL_N(Q)$.  
However, for the generators of the algebra $TL_N(Q)$ there are 
simpler relations  (cf. equations (28)--(31) in \cite{By3}), which 
allows to give a simpler derivation of relation (\ref{PPP1}) 
for the Jones-Wenzl  projector. 

\begin{proposition}\label{relTPT}
In the algebra $H_{N+1}(q)$, $N \geq 2$, relation (\ref{TPT1}) holds.
\end{proposition}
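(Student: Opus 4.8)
The plan is to run the recursion (\ref{PindHT}) once on the inner projector and then reduce everything to two-generator words in $\ST_{N-1}$ and $\ST_N$. Throughout I use that $\SP_{N-1}$ commutes with $\ST_N$: since $(\SP_{N-1}-{\mathsf 1})$ is a sum of monomials in $\ST_1,\dots,\ST_{N-2}$, relation (\ref{defHT3}) gives $\ST_N\SP_{N-1}=\SP_{N-1}\ST_N$, and with the idempotency (\ref{Psq}) this yields $\SP_{N-1}\ST_N\SP_{N-1}=\ST_N\SP_{N-1}$. Writing $\SP_N=\SP_{N-1}-\rho_{N-1}\SP_{N-1}\ST_{N-1}\SP_{N-1}$ with $\rho_{N-1}=[N-1]/[N]$ (this is (\ref{PindHT}) at level $N-1$, valid for $N\ge2$), I would commute the two outer $\ST_N$ past the copies of $\SP_{N-1}$ and apply $\ST_N^2=[2]\ST_N$ from (\ref{defHT1}) to the surviving term, obtaining the first-order identity
\[
\ST_N\SP_N\ST_N=[2]\,\ST_N\SP_{N-1}-\rho_{N-1}\,\SP_{N-1}\ST_N\ST_{N-1}\ST_N\SP_{N-1}.
\]

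The right-hand side of (\ref{TPT1}) carries the degree-five word $\SP_{N-1}\ST_N\ST_{N-1}\ST_N\ST_{N-1}\ST_N\SP_{N-1}$, so the next step is to express it through the degree-three word above. Setting $a=\ST_N$ and $b=\ST_{N-1}$ and using only $a^2=[2]a$, $b^2=[2]b$ and the braid relation $aba=bab+a-b$ from (\ref{defHT2}), a short reduction gives $abab=[2]\,aba+ab-[2]a$ and then
\[
ababa=([2]^2+1)\,aba-[2]^2\,a.
\]
Sandwiching this between two copies of $\SP_{N-1}$ and using $\SP_{N-1}a\SP_{N-1}=\ST_N\SP_{N-1}$ produces
\[
\SP_{N-1}\ST_N\ST_{N-1}\ST_N\ST_{N-1}\ST_N\SP_{N-1}=([2]^2+1)\,\SP_{N-1}\ST_N\ST_{N-1}\ST_N\SP_{N-1}-[2]^2\,\ST_N\SP_{N-1}.
\]

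Finally I would eliminate the common degree-three word between the two displays: solving the second identity for $\SP_{N-1}\ST_N\ST_{N-1}\ST_N\SP_{N-1}$ and substituting it into $([2]^2+1)$ times the first gives
\[
([2]^2+1)\,\ST_N\SP_N\ST_N=\bigl([2]([2]^2+1)-\rho_{N-1}[2]^2\bigr)\ST_N\SP_{N-1}-\rho_{N-1}\,\SP_{N-1}\ST_N\ST_{N-1}\ST_N\ST_{N-1}\ST_N\SP_{N-1}.
\]
The degree-five coefficient is already $-\rho_{N-1}=-[N-1]/[N]$, matching (\ref{TPT1}), so it only remains to identify the first coefficient. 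Factoring out $[2]/[N]$ reduces this to the scalar identity $[N]([2]^2+1)-[2][N-1]=[N+2]+2[N]$, which follows from (\ref{qnid1}): applying $[2][N]=[N-1]+[N+1]$ gives $[2]^2[N]-[2][N-1]=[2][N+1]=[N]+[N+2]$, and adding $[N]$ closes it. I expect the only delicate points to be the bookkeeping in the reduction of $ababa$ and this last $q$-number matching; the rest is forced by the commutation (\ref{defHT3}) and annihilation (\ref{Qas2}) properties.
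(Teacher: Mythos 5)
Your proof is correct, and while it opens exactly as the paper's does, the middle of the argument takes a genuinely different route. Both proofs start by applying the recursion (\ref{PindHT}) once, commuting $\SP_{N-1}$ past $\ST_N$ via (\ref{defHT3}), and using $\ST_N^2=[2]\ST_N$ to reach $\ST_N\SP_N\ST_N=[2]\,\ST_N\SP_{N-1}-\rho_{N-1}\,\SP_{N-1}\ST_N\ST_{N-1}\ST_N\SP_{N-1}$. From there the paper goes \emph{down}: it rewrites the cubic word by the braid relation (\ref{defHT2}), re-absorbs the resulting term $\rho_{N-1}\SP_{N-1}\ST_{N-1}\SP_{N-1}$ back into $\SP_N$ via the recursion (which is why it carries the auxiliary summand $\SP_N$ through the whole computation, arriving at (\ref{TPTHe})), and only then manufactures the quintic word by multiplying by $\ST_N$ on both sides. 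You instead go \emph{up}: your key lemma is the standalone two-letter reduction $ababa=([2]^2+1)\,aba-[2]^2\,a$ in the subalgebra generated by $a=\ST_N$, $b=\ST_{N-1}$ (your derivation via $bab=aba-a+b$ and $abab=[2]\,aba+ab-[2]a$ checks out), which you sandwich with $\SP_{N-1}$ and then eliminate the common cubic sandwich $\SP_{N-1}\ST_N\ST_{N-1}\ST_N\SP_{N-1}$ between your two identities. Note that the elimination needs no division by $[2]^2+1$: substituting $([2]^2+1)\,\SP_{N-1}\ST_N\ST_{N-1}\ST_N\SP_{N-1}=\SP_{N-1}\ST_N\ST_{N-1}\ST_N\ST_{N-1}\ST_N\SP_{N-1}+[2]^2\,\ST_N\SP_{N-1}$ directly into $([2]^2+1)$ times your first identity sidesteps any invertibility question, so you may want to phrase it that way rather than as ``solving for'' the cubic word. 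Your closing scalar identity $[N]([2]^2+1)-[2][N-1]=[N+2]+2[N]$ is exactly equivalent to the paper's (\ref{rhoNN4}), and your verification from (\ref{qnid1}) is sound. What your route buys: no auxiliary $+\SP_N$ bookkeeping, and a reusable local identity for the length-five word that lives purely in $H_3(q)$; what the paper's route buys: a single unbroken chain of equalities with no elimination step, plus the intermediate relation (\ref{TPTHe}) as a byproduct.
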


\begin{proof} 
\begin{align}
{}&  \nonumber 
  \ST_N \SP_N \ST_N  + \SP_N 
\stackrel{(\ref{PindHT}) }{=}  
 \ST_N ( \SP_{N-1} - \rho_{N-1} \SP_{N-1} \ST_{N-1} \SP_{N-1} ) \ST_N + \SP_N \\
{}& \nonumber 
 \stackrel{(\ref{PPpr}),(\ref{defHT1}) }{=}  
  [2] \ST_N \SP_{N-1}  - \rho_{N-1} \SP_{N-1} \ST_N \ST_{N-1} \ST_N \SP_{N-1} + \SP_N \\ 
{}& \nonumber 
 \stackrel{(\ref{defHT2}) }{=}   
 [2] \ST_N \SP_{N-1}  - \rho_{N-1} \SP_{N-1} \ST_{N-1} \ST_N \ST_{N-1} \SP_{N-1}
 - \rho_{N-1} \SP_{N-1} \ST_N \SP_{N-1} \\[1mm]   
{}& \nonumber 
 \qquad  + \rho_{N-1} \SP_{N-1} \ST_{N-1} \SP_{N-1}   + \SP_N   \\
{}& \label{TPTHe} 
 \stackrel{(\ref{PPpr}),(\ref{PindHT}) }{=}  
  ( [2] - \rho_{N-1} ) \ST_N \SP_{N-1} + \SP_{N-1}
   - \rho_{N-1} \SP_{N-1} \ST_{N-1} \ST_N \ST_{N-1} \SP_{N-1} .
\end{align}
Multiplying the derived equality by $\ST_N$ from both sides and using 
relation (\ref{defHT1}) and (\ref{PPpr}), we obtain  
\begin{eqnarray}
{}&   \nonumber 
 \bigl( [2]^2 +1 \bigr) \, \ST_N \SP_N \ST_N = & 
   \bigl( [2]^2 ( [2] - \rho_{N-1} ) + [2]  \bigr) \ST_N \SP_{N-1}    \\[1mm] 
{}& \label{TPT2}
   & - \rho_{N-1} \SP_{N-1} \ST_N \ST_{N-1} \ST_N \ST_{N-1} \ST_N \SP_{N-1} .
\end{eqnarray} 
Relations (\ref{TPT1}) and (\ref{TPT2}) are equivalent because the following 
identity holds 
\begin{align}\label{rhoNN4}
{}&   
 [2]  ( [2] - \rho_{N-1} ) + 1 
 \stackrel{(\ref{rhoNN}) }{=}  \frac{[2]}{\rho_N} + 1  = 
 \frac{[2] [N+1] +[N]}{[N]} 
  \stackrel{(\ref{qnid1})}{=}  \frac{ [N+2] + 2[N]}{[N]} . 
\end{align}
\end{proof} 

\section{On unitary representations on $\bigl({\mathbb C}^n \bigr)^{\otimes N}$ }

Let us regard the algebra $H_N(q)$ as a complex algebra with a parameter  
$q \in {\mathbb C}$, $|q|=1$ and an involution $*$ such that  
\begin{align}\label{invHR}
{}&   
  q^* = q^{-1}, \qquad \SR_k^* = \SR_k^{-1}  
\end{align}
The unitarity of the generators $\SR_k$ is equivalent to the hermiticity of the 
generators   $\ST_k$,
\begin{align}\label{invHT}
{}&   
   \ST_k^* = (q^{-1} \cdot {\mathsf 1} + \SR_k)^*
   = q \cdot {\mathsf 1} + \SR_k^{-1}    \stackrel{(\ref{defH1})}{=}
  q \cdot {\mathsf 1} +  \SR_k - (q - q^{-1})  \cdot {\mathsf 1} = \ST_k . 
\end{align}

Let $I \in \mathrm {Mat}(n,{\mathbb C})$ denote the identity matrix 
and $T \in \mathrm {Mat}(n^2,{\mathbb C})$ be a Hermitian matrix 
satisfying the relations   
\begin{align} 
{}& \label{TTTm} 
	 T^2 = (q+q^{-1}) \, T  , \qquad 
 T_{1} \, T_{2} \, T_{1} - T_{2} \, T_{1} \, T_{2}
  = T_{1}  - T_{2}  ,     
\end{align}
where $T_{1} \equiv T \,{\otimes}\, I$,  
$T_{2} \equiv I \,{\otimes}\, T$, and $\otimes$ stands for the Kronecker product.
Then a homomorphism   
$\tau: H_N(q) \to \mathrm {Mat}(n^N,{\mathbb C})$ such that  
\begin{align}\label{tauH}
 \tau(\ST_k) = T_{k} \equiv
 I^{\otimes (k-1)} \otimes T \otimes I^{\otimes (N-k-1)} 
\end{align}
is a $*$-representation of the algebra $H_N(q)$ on the tensor product space 
$\bigl({\mathbb C}^n \bigr)^{\otimes N}$.  In this representation,  
$\tau(\SR_k) = R_{k}$ are unitary matrices and they provide solutions to 
the Yang-Baxter equation   
\begin{align} \label{YB1}
{}&  R_{k} R_{k+1} R_{k} =  
  R_{k+1} R_{k}R_{k+1} .
\end{align}
Note that the matrices $R_{k}$ are unitary and  involutory if $q=1$.
A classification of solutions of the Yang-Baxter equation (\ref{YB1}) for
this case was obtained in~\cite{LPW}.  

Since we have $(q+q^{-1}) \in \mathbb R$ for $|q|=1$ and relations   
 (\ref{TTTm}) are invariant under the substitution $T_k \to - T_k$, $q \to -q$,  
it suffices to consider only the case   $(q+q^{-1}) >0$. 
(If $(q+q^{-1})=0$, then the only Hermitian matrix such that $T^2=0$ is 
$T=0$.)
 
\begin{proposition}\label{NgamPN}
Let $N \in \mathbb N$, $N \geq 2$ and $q = e^{i \gamma}$, where  
$ \frac{\pi}{N+1} \leq \gamma < \frac{\pi}{N}$. 
In this case, if a Hermitian matrix $T \neq 0$ satisfies relations   
 (\ref{TTTm}) and  $\tau$ is the corresponding representation (\ref{tauH}), then  
$\tau(\SP_N) = 0$.
\end{proposition}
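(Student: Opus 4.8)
My plan is to compare the two antisymmetrizers $\SP_N$ and $\SP'_N$ inside $H_{N+1}(q)$, where both are defined, and to use relation \eqref{PPP1} to show they have the same image. Since the same Hermitian $T$ yields a $*$-representation on every tensor power, $\tau(\SP_N)=0$ on $(\mathbb C^n)^{\otimes N}$ is equivalent to $\tau(\SP_N)=0$ on $W=(\mathbb C^n)^{\otimes(N+1)}$ (where $\SP_N$ means $\SP_N\otimes id$). Writing $Q=\tau(\SP_N)\in\mathrm{Mat}(n^N,\mathbb C)$, I have $\tau(\SP_N)=Q\otimes I$, and, because $\SP'_N$ is the image of $\SP_N$ under $\ST_k\mapsto\ST_{k+1}$ while $\tau(\ST_{k+1})=I\otimes\tau(\ST_k)$, also $\tau(\SP'_N)=I\otimes Q$ with the \emph{same} matrix $Q$. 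First I would note that $\SP_N$ is self-adjoint: the recursion \eqref{PindHT} writes it as a real-coefficient polynomial in the Hermitian $\ST_k$, so $\SP_N^*=\SP_N$ by induction. Hence $Q\otimes I$ and $I\otimes Q$ are orthogonal projections and $D:=\tau(\SP_N-\SP'_N)=Q\otimes I-I\otimes Q$ is Hermitian.

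Next I would feed $D$ into relation \eqref{PPP1} (Proposition~1, valid in $H_{N+1}(q)$), obtaining $D^3=\frac{[N-1][N+1]}{[N]^2}\,D$. The decisive point is the sign of $c_N=\frac{[N-1][N+1]}{[N]^2}$ on the prescribed arc. Writing $[k]=\sin(k\gamma)/\sin\gamma$ for $q=e^{i\gamma}$ and using $\frac{\pi}{N+1}\le\gamma<\frac{\pi}{N}$, one checks $(N-1)\gamma,\,N\gamma\in(0,\pi)$, so $[N-1]>0$ and $[N]>0$, whereas $(N+1)\gamma\in[\pi,\pi+\tfrac{\pi}{N})$, so $\sin((N+1)\gamma)\le0$ and $[N+1]\le0$; thus $c_N\le0$. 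Since $D$ is Hermitian its eigenvalues $\mu$ are real and satisfy $\mu(\mu^2-c_N)=0$; as $c_N\le0$ the only possibility is $\mu=0$, so $D=0$, that is
\[
Q\otimes I=I\otimes Q\quad\text{on } W.
\]

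To finish I would invoke the elementary fact that an operator commuting across a tensor factor is scalar: if $X\in\mathrm{Mat}(n^m,\mathbb C)$ satisfies $X\otimes I=I\otimes X$ on $(\mathbb C^n)^{\otimes(m+1)}$, then $X=c\,I$. This follows by induction on $m$ (the case $m=1$ is a direct entrywise computation; in the step, $P:=X\otimes I=I\otimes X$ commutes with every operator on the first and on the last factor, hence $P=I\otimes M\otimes I$, which forces $X=I\otimes M=M\otimes I$ and reduces to the same statement for the $(m-1)$-factor operator $M$). Applied with $X=Q$, this gives $Q=c\,I$, and $Q^2=Q$ forces $c\in\{0,1\}$. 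I then exclude $c=1$: since $N\ge2$, relation \eqref{Qas2} gives $\ST_1\SP_N=0$, hence $T_1Q=0$ on $(\mathbb C^n)^{\otimes N}$ with $T_1=T\otimes I^{\otimes(N-2)}$ by \eqref{tauH}; if $Q=I$ this would force $T_1=0$ and so $T=0$, contradicting $T\neq0$. Therefore $Q=0$, i.e.\ $\tau(\SP_N)=0$.

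I expect the main obstacle to be the passage $D=0\Rightarrow Q=0$: the eigenvalue argument only yields the coincidence $Q\otimes I=I\otimes Q$, and the real content is recognizing that, thanks to the homogeneity $\tau(\SP'_N)=I\otimes Q$, this coincidence is \emph{exactly} the hypothesis of the tensor-factor lemma, which together with $T\neq0$ rules out the nonzero scalar. The sign computation $c_N\le0$ on $[\frac{\pi}{N+1},\frac{\pi}{N})$ is the other crucial point, as it is precisely what distinguishes this arc from adjacent ranges where $c_N>0$ allows $D\neq0$.
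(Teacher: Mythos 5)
Your proposal is correct and follows essentially the same route as the paper: Hermiticity of $P_N=\tau(\SP_N)$ plus relation (\ref{PPP1}) and the sign estimate $[N-1]>0$, $[N]>0$, $[N+1]\le 0$ on $\bigl[\frac{\pi}{N+1},\frac{\pi}{N}\bigr)$ force $\tau(\SP_N)=\tau(\SP'_N)$, after which $T\neq 0$ kills $P_N$ (your eigenvalue argument for Hermitian $D$ with $D^3=c_N D$ is just a repackaging of the paper's positive/negative semi-definiteness comparison of $(P_N-P'_N)^4$ and $c_N(P_N-P'_N)^2$). The only cosmetic divergence is the last step, where the paper avoids your tensor-factor lemma and scalar-exclusion detour by the one-line shift trick $0=T_1P_N=T_1P''_N=T\otimes P_N$, using $\ST_1\SP_N=0$ and applying the equality $P_N=P'_N$ twice.
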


\begin{proof}
Set $P_N = \tau(\SP_N)$. 
Since $ 0 < \gamma < \frac{\pi}{N}$, then    
  $\rho_n = \frac{[n]}{[n+1]}=\frac{\sin(n \gamma)}{\sin((n+1) \gamma)}$ 
are well defined (and positive) for all $n=1,\ldots, N-1$. 
Therefore, the projectors $P_k$ defined recursively by the formula 
(\ref{PindHT}) exist for all $k=1,\ldots, N$.  
The same formula (\ref{PindHT}) implies also that all the projectors  $P_k$ 
are Hermitian and hence that $(P_{N} - P_{N}' )^4$ and 
$(P_{N} - P_{N}' )^2$ are positive semi-definite matrices. 
But relation (\ref{PPP1}) implies the equality   
\begin{equation}\label{PPP2} 
    (P_{N} - P_{N}' )^4  =  \frac{[N-1] [N+1]}{[N]^2} (P_{N} - P_{N}' )^2 , 
\end{equation}
where $\frac{[N-1] [N+1]}{[N]^2} \leq 0$ because $(N-1) \gamma <\pi$ and  
$ \pi \leq (N+1) \gamma  < 2\pi$. That is, the right hand side of (\ref{PPP2}) 
is a negative semi-definite matrix. Thus, equality (\ref{PPP2}) can hold only 
if  $P_{N} = P_{N}' $.
But then we have $0= T_1 P_N =  T_1 P''_N  =  T \otimes P_N$. Whence $P_N =0$
because $T \neq 0$. 
\end{proof}

Let us remark that the statement proved above is similar to the statement 
about  $C^*$-representations of the Hecke algebra on a Hilbert space 
obtained in  \cite{We1}. In the case that we consider here, the translational invariance 
of representation (\ref{tauH}) leads to additional restrictions. In particular,
it was shown in \cite{By3} that unitary representations of the Temperley-Lieb algebra 
on the tensor product space $\bigl({\mathbb C}^n \bigr)^{\otimes N}$ 
exist only if $|q+q^{-1}|=1,\sqrt{2},\sqrt{3},2$.

\begin{proposition}\label{NNgamP}
Let $N \in \mathbb N$, $N \geq 3$ and $q = e^{i \gamma}$, where  
$ \frac{\pi}{N+1} < \gamma < \frac{\pi}{N}$. Suppose, in addition, that
\begin{eqnarray}
\label{NN20}
[N+2] +2 [N] < 0 .
\end{eqnarray}
In this case, if a Hermitian matrix $T \neq 0$ satisfies relations   
 (\ref{TTTm}) and  $\tau$ is the corresponding representation (\ref{tauH}), then 
$\tau(\SP_{N-1}) = 0$.
\end{proposition}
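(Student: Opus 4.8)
The plan is to apply $\tau$ to relation (\ref{TPT1}) and read it as an identity between Hermitian matrices, then extract the vanishing of $T_N P_{N-1}$ from a competition between positive‑ and negative‑semidefinite terms, in the same spirit in which (\ref{PPP2}) was exploited in Proposition~\ref{NgamPN}. Set $P_k=\tau(\SP_k)$ and $T_k=\tau(\ST_k)$. Exactly as in the proof of Proposition~\ref{NgamPN}, the condition $\gamma<\frac{\pi}{N}$ guarantees that $P_{N-1}$ and $P_N$ exist and are Hermitian orthogonal projections, and since $|N-k|\ge 2$ for $k\le N-2$ the matrix $T_N$ commutes with $P_{N-1}$. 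Moreover $[2]=q+q^{-1}>0$ (here $\gamma<\pi/N\le\pi/3$), so the Hermitian matrix $T_N$, whose eigenvalues solve $\lambda^2=[2]\lambda$ by (\ref{TTTm}), is positive semidefinite. Consequently $T_N P_{N-1}\ge 0$ (a product of commuting positive semidefinite matrices) and $T_N P_N T_N=(P_N T_N)^*(P_N T_N)\ge 0$.

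First I would record the signs of the scalar coefficients appearing in (\ref{TPT1}). For $\frac{\pi}{N+1}<\gamma<\frac{\pi}{N}$ one has $(N-1)\gamma,\,N\gamma\in(0,\pi)$, hence $[N-1]>0$ and $[N]>0$, whereas the extra hypothesis (\ref{NN20}) makes
\[
\frac{[2]\bigl([N+2]+2[N]\bigr)}{[N]}<0 .
\]
The crucial step is then to establish the positivity of the degree‑five monomial $M:=\ST_N\ST_{N-1}\ST_N\ST_{N-1}\ST_N$ in the representation. I expect this to be the main obstacle, since a product of five Hermitian matrices is in general indefinite. It is resolved by the observation that $G:=T_N T_{N-1}T_N$ is Hermitian and, using $T_N^2=[2]T_N$, satisfies $G^2=[2]\,M$; as $[2]>0$ this gives $M=\tfrac1{[2]}G^2\ge 0$, and therefore $P_{N-1}MP_{N-1}=\tfrac1{[2]}(GP_{N-1})^*(GP_{N-1})\ge 0$.

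Next I would rearrange the image under $\tau$ of relation (\ref{TPT1}) as
\[
\frac{[N-1]}{[N]}\,P_{N-1}MP_{N-1}
=\frac{[2]\bigl([N+2]+2[N]\bigr)}{[N]}\,T_N P_{N-1}-\bigl([2]^2+1\bigr)\,T_N P_N T_N .
\]
The left-hand side is positive semidefinite (positive coefficient times the positive semidefinite matrix $P_{N-1}MP_{N-1}$), while on the right the first term is negative semidefinite (negative coefficient times $T_N P_{N-1}\ge 0$) and the second is negative semidefinite because $[2]^2+1>0$ and $T_N P_N T_N\ge 0$. Thus the right-hand side is negative semidefinite. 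A matrix that is simultaneously positive and negative semidefinite is zero, so both sides vanish. Since the right-hand side is a sum of two negative-semidefinite matrices whose sum is zero, each summand vanishes; as the coefficient $\frac{[2]([N+2]+2[N])}{[N]}$ is nonzero, this forces $T_N P_{N-1}=0$.

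Finally I would translate $T_N P_{N-1}=0$ into the assertion. In the representation $\tau$ on $\bigl(\mathbb C^n\bigr)^{\otimes(N+1)}$ the matrix $T_N=I^{\otimes(N-1)}\otimes T$ acts nontrivially only on the last two tensor factors, whereas $P_{N-1}=\tau(\SP_{N-1})$ acts only on the first $N-1$ factors; their supports are disjoint, so $T_N P_{N-1}=\tau(\SP_{N-1})\otimes T$. Because $T\neq 0$, the equality $T_N P_{N-1}=0$ forces $\tau(\SP_{N-1})=0$, which is the claim. I note that, in contrast to Proposition~\ref{NgamPN}, this argument uses (\ref{TPT1}) directly and does not require the shifted projector $\SP'_N$; the hypothesis (\ref{NN20}) enters solely to fix the sign of the $T_N P_{N-1}$ coefficient.
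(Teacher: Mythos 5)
Your proof is correct, and its core mechanism is the paper's own: apply $\tau$ to (\ref{TPT1}), pit a positive-semidefinite side against a negative-semidefinite side to force $T_N P_{N-1}=0$, and then use the disjoint tensor supports of $T_N$ and $P_{N-1}$ together with $T\neq 0$ to conclude $\tau(\SP_{N-1})=0$. The one genuine structural difference is at the start: the paper first invokes Proposition~\ref{NgamPN} (the hypothesis places $\gamma$ in that proposition's range) to get $P_N=0$, which collapses (\ref{TPT1}) to the two-term identity (\ref{TPTgams}), and the sign comparison is then between $[2]\bigl([N+2]+2[N]\bigr)\,T_NP_{N-1}$ and $[N-1]\,P_{N-1}T_NT_{N-1}T_NT_{N-1}T_NP_{N-1}$ alone. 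You instead retain the term $\bigl([2]^2+1\bigr)\,T_NP_NT_N$, observe $T_NP_NT_N=(P_NT_N)^*(P_NT_N)\ge 0$, fold it into the negative-semidefinite side, and then use the (correct) fact that a vanishing sum of negative-semidefinite matrices has vanishing summands. This makes your argument independent of Proposition~\ref{NgamPN}: it needs only $0<\gamma<\frac{\pi}{N}$ (giving $[2],[N-1],[N]>0$ and the existence of the projectors) together with (\ref{NN20}), so it is more self-contained and in principle applies on a slightly wider parameter range, at the cost of tracking one extra term; the paper's route buys brevity, reducing everything to a single two-term sign comparison. You also make explicit a step the paper leaves tacit, namely why the quintic monomial is positive semidefinite: with $G=T_NT_{N-1}T_N$ Hermitian, $T_N^2=[2]T_N$ gives $G^2=[2]\,T_NT_{N-1}T_NT_{N-1}T_N$, whence $P_{N-1}T_NT_{N-1}T_NT_{N-1}T_NP_{N-1}=\frac{1}{[2]}(GP_{N-1})^*(GP_{N-1})\ge 0$, whereas the paper asserts positivity of the right-hand side of (\ref{TPTgams}) only by pointing to $[N-1]>0$. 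One caution on your closing remark: from the vanishing of the summand $T_NP_NT_N$ you could deduce $P_NT_N=0$, but not $P_N=0$ by the support argument, since $T_N$ and $P_N$ overlap on the $N$-th tensor factor --- so your proof does not recover Proposition~\ref{NgamPN} as a byproduct, nor does it need to.
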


\begin{proof} 
By Proposition \ref{NgamPN}, we have $P_N =0$. Since, furthermore, 
we have $[N] >0$, then relation (\ref{TPT1}) acquires the form   
\begin{eqnarray}
\label{TPTgams}
{}&  \displaystyle 
    [2] \bigl( [N+2] +2 [N] \bigr)   \,  T_N P_{N-1}  = 
  [N-1]  \, P_{N-1} T_N T_{N-1} T_N T_{N-1} T_N  P_{N-1} . 
\end{eqnarray} 
The right hand side of (\ref{TPTgams}) is a positive semi-definite matrix since 
$[N-1] >0$. The matrix  
$[2]  T_N P_{N-1} =(T_N P_{N-1}) (T_N P_{N-1})^* $ 
is also positive semi-definite because $T_N$ commutes with $P_{N-1}$. 
Therefore, if the condition (\ref{NN20}) is satisfied, then the left hand side of  
(\ref{TPTgams}) is a negative semi-definite matrix. Thus, relation (\ref{TPTgams})  
can hold only if  $T_N P_{N-1} =0$.
But $T_N P_{N-1} = P_{N-1} \otimes T$, where $T \neq 0$. Whence $ P_{N-1} =0$. 
\end{proof} 

Let us remark that we have $[N+2] +2 [N] = 1$ for $\gamma = \frac{\pi}{N+1}$ and  
$[N+2] +2 [N] = -[2] < 0$ for $\gamma = \frac{\pi}{N}$. Therefore, on  
every interval  $\bigl( \frac{\pi}{N+1} , \frac{\pi}{N} \bigr)$, $N \geq 3$, 
there exist values of $\gamma$ for which the condition  (\ref{NN20}) is satisfied.

\begin{proposition}
Let $q = e^{i \gamma}$, where  
$\gamma \in \bigl(\arccos (   \sqrt{ \frac{1}{8} (1 + \sqrt{5})} , \frac{1}{2} \pi \bigr)$.
If a Hermitian matrix $T \neq 0$ satisfies relations (\ref{TTTm}),  
then $T = [2] \, (I \otimes I)$.
\end{proposition}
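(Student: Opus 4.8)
The plan is to reduce the entire statement to the single assertion that $\tau(\SP_2) = 0$. Indeed, as recorded in the proof of Lemma~1, $\SP_2 = {\mathsf 1} - \frac{1}{[2]}\,\ST_1$, so in the representation $\tau$ one has $\tau(\SP_2) = I\otimes I - \frac{1}{[2]}\,T$, and the vanishing of this matrix is equivalent to $T = [2]\,(I\otimes I)$. Thus it suffices to prove $\tau(\SP_2) = 0$ for every $\gamma$ in the stated interval.

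To do this I would split the interval $\bigl(\arccos\sqrt{\tfrac18(1+\sqrt5)},\,\tfrac{\pi}{2}\bigr)$ at the point $\gamma=\tfrac{\pi}{3}$ and apply the two preceding propositions. For $\gamma\in[\tfrac{\pi}{3},\tfrac{\pi}{2})$ the hypothesis of Proposition~\ref{NgamPN} holds with $N=2$, giving $\tau(\SP_2)=0$ at once. For $\gamma\in\bigl(\arccos\sqrt{\tfrac18(1+\sqrt5)},\,\tfrac{\pi}{3}\bigr)$ I would instead invoke Proposition~\ref{NNgamP} with $N=3$, whose conclusion is $\tau(\SP_{N-1})=\tau(\SP_2)=0$. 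This requires checking two hypotheses: that $\gamma\in(\tfrac{\pi}{4},\tfrac{\pi}{3})$, which holds because $\tfrac18(1+\sqrt5)<\tfrac12$ forces $\arccos\sqrt{\tfrac18(1+\sqrt5)}>\tfrac{\pi}{4}$, and the sign condition~(\ref{NN20}), namely $[5]+2[3]<0$.

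The crux is to show that this sign condition is controlled exactly by the stated threshold. Writing $[k]=\sin(k\gamma)/\sin\gamma$ and using the multiple-angle formulas for $\sin3\gamma$ and $\sin5\gamma$, I would reduce
\[
[5]+2[3] = \frac{\sin(5\gamma)+2\sin(3\gamma)}{\sin\gamma} = 16\cos^4\gamma - 4\cos^2\gamma - 1 .
\]
Setting $x=\cos^2\gamma$, the equation $16x^2-4x-1=0$ has the unique nonnegative root $x=\tfrac18(1+\sqrt5)$; hence $[5]+2[3]<0$ precisely when $\cos^2\gamma<\tfrac18(1+\sqrt5)$, that is, when $\gamma>\arccos\sqrt{\tfrac18(1+\sqrt5)}$. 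Therefore (\ref{NN20}) holds throughout the subinterval $\bigl(\arccos\sqrt{\tfrac18(1+\sqrt5)},\,\tfrac{\pi}{3}\bigr)$, Proposition~\ref{NNgamP} applies, and in both cases $\tau(\SP_2)=0$, which yields $T=[2]\,(I\otimes I)$.

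I expect the only genuine computation to be this trigonometric reduction of $[5]+2[3]$ to a biquadratic in $\cos\gamma$ together with the identification of its relevant root as $\tfrac18(1+\sqrt5)$; everything else is a matter of matching the interval endpoints to the hypotheses of Propositions~\ref{NgamPN} and~\ref{NNgamP}. It is worth noting that the openness of the interval at the lower endpoint is exactly what makes the strict inequality in~(\ref{NN20}) available, while the exclusion of $\tfrac{\pi}{2}$ is forced since there $[2]=0$ and the expression for $\SP_2$ degenerates.
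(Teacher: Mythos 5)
Your proposal is correct and takes essentially the same route as the paper: reduce to $\tau(\SP_2)=0$, handle $\gamma\in[\tfrac{\pi}{3},\tfrac{\pi}{2})$ via Proposition~\ref{NgamPN} with $N=2$, and handle the lower subinterval via Proposition~\ref{NNgamP} with $N=3$, where the condition $[5]+2[3]<0$ reduces to $16\cos^4\gamma-4\cos^2\gamma-1<0$ with threshold $\cos^2\gamma_0=\tfrac18(1+\sqrt5)$. Your direct sign analysis of the quadratic in $x=\cos^2\gamma$ is merely a slightly more explicit version of the paper's factorization $f(\gamma)=(16\cos^4\gamma-4\cos^2\gamma-1)\sin\gamma$ combined with $f(\tfrac{\pi}{3})<0$, and your verification that $\gamma_0\in(\tfrac{\pi}{4},\tfrac{\pi}{3})$ correctly fills in the hypothesis check the paper leaves implicit.
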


\begin{proof}
Consider the corresponding representation (\ref{tauH}). 
For $\gamma \in [\frac{1}{3} \pi, \frac{1}{2} \pi )$, we have $T = [2] \, (I \otimes I)$
because $P_2=0$ by Proposition~\ref{NgamPN}. 

For $N=3$, the condition (\ref{NN20}) is satisfied if 
$f(\gamma) \equiv \sin 5 \gamma  +2 \sin 3 \gamma <0$. The equation 
$f(\gamma)  =  (16 \cos ^4\gamma - 4 \cos^2 \gamma -1) \sin \gamma=0$
has on the interval $(\frac{1}{4} \pi, \frac{1}{3} \pi )$ a single root which 
corresponds to the value $\cos^2 \gamma_0 = \frac{1}{8}(1+\sqrt{5})$. 
Taking into account that $f(\frac{1}{3}\pi) < 0$, we conclude that,  for 
$\gamma \in \bigl( \gamma_0, \frac{1}{3}\pi)$,  the condition (\ref{NN20}) 
is satisfied. But then we have $P_2=0$ by Proposition~\ref{NNgamP} 
and, hence,  $T = [2] \, (I \otimes I)$. 
\end{proof}  

\vspace*{1mm}
\small{
{\bf Acknowledgements.} 
This work was supported in part by the program NCCR SwissMAP of
the Swiss National Science Foundation.  
}

 \vspace*{1.5mm}
{\sc \small
\noindent
Section of Mathematics, University of Geneva,  
C.P. 64, 1211 Gen\`eve 4, Switzerland  \\[1mm] 
Steklov Mathematical Institute, 
Fontanka 27, 191023, St. Petersburg, Russia  
}

\end{document}